\newtheorem{thm}{Theorem}[section]
\newtheorem{cor}[thm]{Corollary}
\newtheorem{lem}[thm]{Lemma}
\theoremstyle{definition}
\theoremstyle{remark}
\newtheorem{rem}[thm]{Remark}
\numberwithin{equation}{section}
\newcommand{\ene}{\mathbb{N}}
\newcommand{\ar}{\mathbb{R}}
\newcommand{\cal}{\mathcal}
\newcommand{\bi}{\begin{itemize}}
	\newcommand{\ei}{\end{itemize}}
\newcommand{\be}{\begin{enumerate}}
	\newcommand{\ee}{\end{enumerate}}
\newcommand{\beq}{\begin{equation}}
\newcommand{\eq}{\end{equation}}
\DeclareMathOperator*{\sign}{sign\,}
\begin{document}

%-------------------------------------------------------------------------
% editorial commands: to be inserted by the editorial office
%
%\firstpage{1} \volume{228} \Copyrightyear{2004} \DOI{003-0001}
%
%
%\seriesextra{Just an add-on}
%\seriesextraline{This is the Concrete Title of this Book\br H.E. R and S.T.C. W, Eds.}
%
% for journals:
%
%\firstpage{1}
%\issuenumber{1}
%\Volumeandyear{1 (2004)}
%\Copyrightyear{2004}
%\DOI{003-xxxx-y}
%\Signet
%\commby{inhouse}
%\submitted{March 14, 2003}
%\received{March 16, 2000}
%\revised{June 1, 2000}
%\accepted{July 22, 2000}
%
%
%
%---------------------------------------------------------------------------
%Insert here the title, affiliations and abstract:
%

\title[On the set of  G\^ateaux differentiability of the $L^1$ norm]{On the set of  G\^ateaux differentiability of the $L^1$ norm}

%----------Author 1
\author{Julio Delgado}

\address{Universidad del Valle\\
Departamento de Matematicas\\
Calle 13 100-00\\
Cali\\
Colombia}

\email{delgado.julio@correounivalle.edu.co}

\thanks{ The first author was supported by Grant CI71234 Vic. Inv. Universidad del Valle.}
%----------Author 2
\author{Andr\'es F. Mu\~noz-Tello}
\address{Universidad Santiago de Cali\\
Calle 5 62-00\\
Cali\\
Colombia}
\email{andres.munoz00@usc.edu.co}

\subjclass{Primary 58C20; Secondary 58C25}

\keywords{G\^ateaux differentiability, Set of differentiability}

%\date{today}
%----------additions
%\dedicatory{To my boss}

\begin{abstract}
 Let $(\Omega, \mathcal{M}, \mu)$ be a measure space. 
In this paper we establish the set of G\^ateaux differentiability for the usual norm  of $L^{1}(\Omega, \mu)$ and the corresponding derivative formulae at each point in this set.  
\end{abstract}

\maketitle

\section{Introduction}
The investigation on the sets of differentiability of Lipschitz functions goes back to the classical Rademacher's Theorem~\cite{ra} of 1919. This theorem states that: 
\begin{center}
	{\it Every Lipschitz function $f:\mathbb{R}^{n}\rightarrow \mathbb{R}$ is differentiable almost everywhere.}
\end{center}
Regarding the converse of Rademacher's Theorem it has been recently established by Preiss and Speight (cf. \cite{ps}) that: if $n>1$ then there exists a Lebesgue null set in $\mathbb{R}^{n}$ containing a point of differentiability of each Lipschitz function $f: \mathbb{R}^{n} \rightarrow \mathbb{R}^{n-1}$. A more detailed discussion around the 
Rademacher's Theorem and some converse  versions can be found in \cite{preim4}. 
An extension of Rademacher's Theorem to functions defined on separable Banach spaces and valued into Banach spaces was established by Phelps in~\cite{ph}: 

\begin{thm}[Phelps]\label{phelps}
	Let $E$ be a separable real Banach space and $F$  a real Banach space with  the 	Radon-Nikod\'{y}m property (RNP). If $T: G \rightarrow F$ is a locally Lipschitz mapping  defined on a nonempty open subset  $G$  of $E$. Then $T$ is G\^ateaux differentiable in the complement of a Gaussian null subset of $G$.	
\end{thm}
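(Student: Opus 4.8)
The plan is to fix an arbitrary non-degenerate Gaussian measure $\mu$ on $E$ and to prove that $T$ is G\^ateaux differentiable $\mu$-almost everywhere; since a Borel set is Gaussian null precisely when it is $\mu$-null for every such $\mu$, this yields the theorem. First I would reduce the target to a separable space: because $E$ is separable and $T$ is locally Lipschitz, $T(G)$ is separable, so I may replace $F$ by the closed separable subspace it generates, which still has the RNP. I would then fix a countable set $\{\Lambda_j\}\subseteq F^{*}$ that norms this subspace, so that a vector in $F$ is determined by the values $\Lambda_j$, and recall the characterisation of the RNP that every Lipschitz map of a real interval into $F$ is differentiable almost everywhere; this is the only place the RNP enters.

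The first substantive step is to produce directional derivatives $\mu$-a.e. Let $H_{\mu}$ be the Cameron--Martin space of $\mu$, which is dense in $E$ and separable. For a fixed $e\in H_{\mu}$ I would disintegrate $\mu$ along the lines $x+\mathbb{R}e$; because $e\in H_{\mu}$ the conditional measures on these lines are genuine non-degenerate one-dimensional Gaussians, hence equivalent to Lebesgue measure. Since $t\mapsto T(x+te)$ is Lipschitz, the RNP makes it differentiable for a.e. $t$ on each line, and Fubini then gives that the directional derivative $T'(x;e)$ exists for $\mu$-a.e. $x$. Choosing a countable dense set $\{h_{k}\}\subseteq H_{\mu}$ and intersecting the corresponding full-measure sets, I obtain a $\mu$-conull set on which $T'(x;h_{k})$ exists for every $k$. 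The local Lipschitz bound controls difference quotients uniformly, $\|t^{-1}(T(x+th)-T(x+th_{k}))\|\le \mathrm{Lip}(T)\,\|h-h_{k}\|$, so the family $\{T'(x;h_{k})\}$ is Cauchy as $h_{k}\to h$ and its limit defines $T'(x;h)$ for every $h\in E$. The same estimate shows that $h\mapsto T'(x;h)$ is Lipschitz, and positive homogeneity is immediate, so on the conull set I have a continuous, positively homogeneous candidate derivative.

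The heart of the argument, and the step I expect to be the main obstacle, is additivity: I must show $T'(x;u+v)=T'(x;u)+T'(x;v)$ off a $\mu$-null set, for this is exactly what can fail even when all directional derivatives exist (witness a positively homogeneous, odd but non-linear Lipschitz map, whose only bad point is the origin). Using the norming functionals, $\Lambda_j(T'(x;h))=(\Lambda_j\circ T)'(x;h)$, so by Hahn--Banach it suffices to prove additivity for the scalar Lipschitz functions $g_j=\Lambda_j\circ T$. For fixed $u,v\in H_{\mu}$ I would identify $g_j'(\,\cdot\,;e)$, for $e\in H_{\mu}$, with the weak derivative of $g_j$ in the direction $e$ with respect to $\mu$, through the Gaussian integration-by-parts formula available precisely for Cameron--Martin directions. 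Since this weak derivative is linear in $e\in H_{\mu}$ and is $\mu$-a.e. unique, one obtains $g_j'(x;u+v)=g_j'(x;u)+g_j'(x;v)$ for $\mu$-a.e. $x$. Running this over the countable index $j$ and over all pairs from $\{h_{k}\}$, then extending by continuity of $h\mapsto T'(x;h)$, yields that $T'(x;\cdot)$ is additive, hence linear; boundedness follows from the Lipschitz bound. Thus $T$ is G\^ateaux differentiable $\mu$-a.e., for every non-degenerate $\mu$.

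I would stress that the decisive feature is the restriction of directions to $H_{\mu}$: only for $e\in H_{\mu}$ do the one-dimensional conditional measures stay absolutely continuous with respect to Lebesgue measure, so that both the Fubini argument for existence and the integration-by-parts argument for additivity go through. A set that is merely null on the lines of a fixed countable family of directions need not be Gaussian null, since an arbitrary $\mu$ may have no such direction in its Cameron--Martin space; processing the directions inside $H_{\mu}$ separately for each $\mu$ is what upgrades the exceptional set to a genuinely Gaussian null one. The remaining care is the routine measurability of the differentiability sets, handled using the separability of $F$ and the fact that difference quotients are measurable.
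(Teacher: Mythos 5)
This statement is Phelps's theorem, which the paper only \emph{quotes} as background (with a citation to Phelps's 1978 article); the paper contains no proof of it, so there is nothing internal to compare your attempt against. Judged on its own, your outline is essentially the standard Gaussian-measure proof of this result and its architecture is sound: reduce $F$ to the separable closed subspace generated by $T(G)$, which inherits the RNP; for a fixed non-degenerate Gaussian $\mu$ use that the conditional measures on lines in a Cameron--Martin direction $e\in H_{\mu}$ are non-degenerate one-dimensional Gaussians, so that the RNP (through a.e.\ differentiability of Lipschitz curves) plus Fubini yields the directional derivative $T'(x;e)$ for $\mu$-a.e.\ $x$; pass from a countable dense set of directions to all of $E$ by the uniform Lipschitz control of difference quotients; and prove additivity by scalarizing with a countable norming family and identifying the pointwise directional derivative with the weak derivative given by Gaussian integration by parts, which is additive in the direction. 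You correctly single out additivity as the step where the argument could fail, and your mechanism for it is legitimate. Three points would need to be written out carefully rather than asserted: (i) localization, since $T$ is only \emph{locally} Lipschitz on the open set $G$, so both the Fubini step and the integration-by-parts identity must be run on countably many balls $B$ with $\overline{B}+[-\epsilon,\epsilon]e\subseteq G$ on which $T$ is Lipschitz; (ii) the measurability of the exceptional set (or of a Borel $\mu$-conull subset of the good set), obtained from countable families of rational parameters in the limits; (iii) the identification of the a.e.\ pointwise directional derivative of a Lipschitz function with its weak $\mu$-derivative, which again follows from the one-dimensional integration by parts along lines plus disintegration. None of these is a conceptual gap. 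For the record, Phelps's own published argument is organized differently: it runs Aronszajn's differentiability scheme (exceptional set a countable union of sets null on all lines in fixed directions spanning a dense subspace) and then shows that every such Aronszajn-type null set is Gaussian null, whereas you fix $\mu$ first and work only with directions in $H_{\mu}$; the two routes use the same ingredients, and yours is the more common modern presentation.
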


Other extensions of Rademacher Theorem to Lipschitz maps between infinite-dimensional Banach spaces have been investigated by several authors e.g. Aronszajn \cite{ar}, Bongiorno~\cite{bon}, Mankiewicz \cite{man}, Phelps~\cite{ph}, Preiss and Zaj\'i\v{c}ek \cite{pz}, among others). The research on the differentiability on Banach spaces has been a very active field  in the last 
decades. We just quote some  few recent examples: In  Potapov and Sukachev's work~\cite{ps14}, they answer to a question in the theory of Schatten- Von Neumann ideals of whether their norms have same differentiability properties as the norms of their commutative counterparts. On the other hand Lindenstrauss et al.~\cite{lin} prove that a real valued Lipschitz function on an Asplund space has points of Fr\'echet differentiability. In in~\cite{gol} Goldys and Gozzi  apply the Phelps Theorem in the proof of existence and uniqueness of solutions of second order parabolic Hamilton-Jacobi-Bellman equations in Hilbert spaces, obtaining optimal feedback for first order stochastic PDEs that arise in economic theory, the theory of population dynamics and financial models.\\

The study of the set of differentiability of Lipschitz functions on Banach spaces has been carried out for instance in Deville et al.~\cite{de}, Diestel~\cite{di}, Dore and Maleva~\cite{dore} among others. For the $L^{p}(\Omega)$ space where $\Omega$ is a bounded domain and $1<p<\infty$, it is well known (cf. \cite{di}) that the norm $\|f\|_{L^{p}} = (\int_{\Omega}|f(w)|^{p}dw)^{1/p}$ is Fr\'echet differentiable and moreover its derivative is given by 
\beq\partial_{F}\|f\|_{L^{p}}=\frac{|f|^{p-1}\sign(f)}{\|f\|_{L^{p}}^{p-1}},\label{formF1q}\eq
for any $f\neq 0$. The case of Fr\'echet differentiability for $p=1$  is more pathological. Here we will explicitly find the set of  G\^ateaux differentiability for the  $L^{1}(\Omega, \mu)$ norm, where $(\Omega,\cal{M}, \mu)$ is a measure space. The characterization of this set is given in Theorem \ref{main1a} of Section 3.

%However the $L^1$ case is pathological. Since $L^1$ is separable, there is not an admissible norm of class $C^1$.****
%Since $L^1$ is separable and $(L^1)'=L^{\infty}$, there is not an admissible norm and $F$-differentiable on $L^1$. Moreover the standard $L^1$ norm is not Fr\'echet differentiable  at any point.... 

%We will prove that the measure for the complement of the differentiability set with the norm of the supreme in $\ell^{\infty}(\mathbb{R})$ is zero. In addition, for a measure space ($\Omega$,$\mathcal{A},\mu$), we will show that for a norm defined in $L^{\infty}(\Omega)$ space, there is no derivative at any point belonging to this Banach space.

\section[Preliminaries]{Preliminaries}
In this section we briefly give a basic background in our context. We start with 
 the  two most classical notions of differentiability on Banach spaces.  The most simple one is that of G\^ateaux differentiability. Let $X$, $Y$ be  Banach spaces and $\Omega\subseteq X$ an open set. A function $f:\Omega\rightarrow Y$ is {\it G\^ateaux differentiable ($G$-differentiable)} at a point $a\in\Omega$, if 
$$\lim\limits_{t\rightarrow 0} \frac{f(a+th)-f(a)}{t}=u \left(h\right)$$
exists for all $h\in X$, and $u$ defines  a bounded linear operator from $X$ into $Y$. The operator $u$ is called the G\^{a}teaux derivative of $f$ at the point $a$ and is denoted by $\partial_{G}f(a)$. 

A function $f:\Omega \rightarrow Y$ is {\it Fr\'echet differentiable ($F$-differentiable)} at $a$, if there is a bounded linear operator $u$ such that
$$f\left(a+h\right)-f\left(a\right)-u\left(h\right)=r\left(h\right)$$ where $\lim\limits_{h\rightarrow 0} \frac{\left\|r\left(h\right)\right\|_{Y}}{\left\| h \right\|_{X}}=0$. The operator $u$ is called the Fr\'{e}chet derivative of $f$ at the point $a$  and is denoted by $\partial_{F}f(a)$.

It is not difficult to see that, if $f$ is   $F$-differentiable at $a$ then $f$ is $G$-differentiable at $a$ and in that case $\partial_{F}f(a)=\partial_{G}f(a)$.  For a given  function $\varphi:X\rightarrow Y$, we say that $B$ is the set of $G$-differentiability or $F$-differentiability of $\varphi$, if respectively for every element of the set $B$ the function $\varphi$ is $G$-differentiable or $F$-differentiable.

On the other hand, a set is called a {\it $G_{\delta}$ set} if it can be expressed as a countable intersection of open sets. This type of set is fundamental in the definition of the {\it F-Asplund  and $G$-Asplund} spaces, since these respectively are Banach spaces in which all convex and continuous function defined in an open and convex subset is $F$-differentiable (resp. $G$-differentiable) in a dense and $G_{\delta}$ set. The following theorem shows  important examples of $G$-Asplund spaces.

\begin{thm}[Mazur]\label{a4}
	Every separable Banach space is a $G$-Asplund space.
\end{thm}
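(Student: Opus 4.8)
The plan is to follow the classical route for convex functions: reduce G\^ateaux differentiability to the vanishing of a nonnegative ``gap'' functional in each direction, and then exploit separability through the Baire category theorem. Let $f$ be continuous and convex on an open convex set $U\subseteq X$. First I would record two standard preliminary facts: such an $f$ is locally Lipschitz, and for every $x\in U$ and $h\in X$ the one-sided directional derivative
\[
f'_+(x,h)=\lim_{t\downarrow 0}\frac{f(x+th)-f(x)}{t}=\inf_{t>0}\frac{f(x+th)-f(x)}{t}
\]
exists, the last equality holding because convexity makes the difference quotient nondecreasing in $t$. The map $h\mapsto f'_+(x,h)$ is sublinear, and $f$ is G\^ateaux differentiable at $x$ precisely when this sublinear functional is linear, equivalently when the gap
\[
\rho_h(x)\egal f'_+(x,h)+f'_+(x,-h)\ge 0
\]
vanishes for every $h$. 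Since $f$ is locally Lipschitz with constant $L$ near $x$, passing to the limit in the difference quotients shows $h\mapsto f'_+(x,h)$ is $L$-Lipschitz, so $h\mapsto\rho_h(x)$ is $2L$-Lipschitz in the direction variable.

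Next, fixing a direction $h$, I would analyze the set $A_h=\{x\in U:\rho_h(x)=0\}$. Writing $f'_+(\cdot,h)$ as an infimum of the continuous (indeed convex) difference quotients $x\mapsto(f(x+th)-f(x))/t$ exhibits it as upper semicontinuous, hence $\rho_h$ is upper semicontinuous and $A_h=\bigcap_{k\in\ene}\{\rho_h<1/k\}$ is a $G_\delta$ set. For density I would restrict $f$ to an arbitrary line $x_0+\ar h$: the resulting function of one real variable is convex, hence differentiable off a countable set, so $\{t:\rho_h(x_0+th)=0\}$ is co-countable and therefore dense in $\ar$; this places points of $A_h$ arbitrarily close to any $x_0$, proving $A_h$ dense. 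Thus each $A_h$ is a dense $G_\delta$ in $U$.

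Separability now enters: choose a countable set $\{h_n\}_{n\in\ene}$ dense in the unit sphere of $X$ and set $A=\bigcap_n A_{h_n}$. Since $U$ is an open subset of a complete metric space it is a Baire space, so $A$ is again a dense $G_\delta$. It remains to check that $f$ is genuinely G\^ateaux differentiable at every $x\in A$, not merely differentiable along the chosen directions. This is the step I expect to be the main obstacle, and it is where the Lipschitz estimate from the first paragraph is indispensable: for $x\in A$ the nonnegative function $h\mapsto\rho_h(x)$ is $2L$-Lipschitz and vanishes on the dense set $\{h_n\}$, hence vanishes identically, so $f'_+(x,h)=-f'_+(x,-h)$ for all $h$. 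A sublinear functional satisfying this identity is additive, and being positively homogeneous and Lipschitz it is a bounded linear functional; this functional is the G\^ateaux derivative $\partial_G f(x)$. Therefore $f$ is G\^ateaux differentiable on the dense $G_\delta$ set $A$, which is exactly the assertion that a separable $X$ is $G$-Asplund.
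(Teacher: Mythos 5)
The paper states Mazur's theorem purely as quoted classical background and supplies no proof of it, so there is nothing internal to compare your argument against; what you have written is the standard textbook proof (essentially Theorem 1.20 in Phelps's \emph{Convex Functions, Monotone Operators and Differentiability}, cited in the bibliography as \cite{ph2}), and it is correct. All the essential steps are present and in the right order: local Lipschitzness of a continuous convex function, existence of $f'_+(x,h)$ as a decreasing infimum of difference quotients, the characterization of G\^ateaux differentiability as the vanishing of the gap $\rho_h(x)=f'_+(x,h)+f'_+(x,-h)$, upper semicontinuity of $f'_+(\cdot,h)$ making $A_h=\{\rho_h=0\}$ a $G_\delta$, density of $A_h$ from the countability of the non-differentiability set of a one-variable convex function, the Baire intersection over a countable dense set of directions, and the concluding Lipschitz-plus-density argument showing $\rho_\cdot(x)\equiv 0$, so that the sublinear functional $f'_+(x,\cdot)$ is additive, homogeneous and bounded. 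You correctly identified the last step as the crux: without the uniform Lipschitz bound on $h\mapsto\rho_h(x)$ one could not pass from the countably many test directions to all of $X$. Two routine points could be made explicit but are not gaps: the infimum defining $f'_+(x,h)$ should be taken over those $t>0$ with $x+th\in U$ (which does not affect the semicontinuity argument on a neighbourhood of a given point), and one should record that $\rho_{\lambda h}=\lambda\rho_h$ for $\lambda>0$ together with $\rho_{-h}=\rho_h$ lets vanishing on a dense subset of the unit sphere propagate to all $h\in X$.
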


Moreover, it is well known that the set of all $F$-Asplund spaces is strictly contained in the set of all $G$-Asplund spaces. In fact, as proved in Deville et al.~\cite{de}, the usual norm of $\ell^{1}(\mathbb{R})$ is not $F$-differentiable at  any point of the space $\ell^{1}(\mathbb{R})$. Then $\ell^{1}(\mathbb{R})$ is not a $F$-Asplund space; but $\ell^{1}(\mathbb{R})$ is a separable Banach space and therefore a $G$-Asplund space by  Mazur's theorem.

As we have quoted before, the $L^p$ norm is $F$-differentiable at any point in the case  $1<p<\infty$ and its corresponding derivative is given by \eqref{formF1q}. 
 Moreover, since the dual of $L^{p}(\Omega)$ is $L^{q}(\Omega)$, for $1<p<\infty $ and $\frac{1}{p}+\frac{1}{q}=1$, and they are separable Banach spaces, by  Theorem 2.12 in Phelps~\cite{ph2} the space $L^{p}(\Omega)$ is a $F$-Asplund space for $1<p<\infty$.

%If $p=1$, then the norm of $L^{1} (\Omega)$ is nowhere $F$-differentiable, which leads us to add the remark described below. ***********????

\begin{rem} In the special case of the $L^{1}(\Omega,\cal{M}, \mu)$ norm we observe the following regarding  the set of $G$-differentiability. It is well known that(cf. \cite{cohn})  $L^1(\Omega,\cal{M}, \mu)$ is separable provided $\mu$ is $\sigma$-finite and the  $\sigma$-algebra $\cal{M}$ is countably generated. Since $\mathbb{R}$ has the RNP, then the $L^{1}(\Omega, \mu)$ norm satisfies the conditions of  Phelps's Theorem. Therefore, the Gaussian measure of the complement of the $G$-differentiability set is zero and  by Mazur's Theorem this $G$-differentiability set is dense and a $G_{\delta}$ set.	
\end{rem}	

In the next section we will give an explicit description of the set of $G$-differentiability for the  usual norm of $L^{1}(\Omega, \cal{M}, \mu)$ without assuming  the condition of separability.

\section[The $G$-differentiability set for the $L^{1}(\Omega, \mu)$ norm]{The $G$-differentiability set for the $L^{1}(\Omega, \mu)$ norm}

Given a measure space $(\Omega, \cal{M}, \mu)$, we denote by $\mathcal{L}^1(\Omega, \mu)$ the space of the real valued integrable functions over $\Omega$.  On $\mathcal{L}(\Omega, \mu)$ one defines the pseudonorm $\varphi(f)=\int_{\Omega}|f(x)|d\mu(x)$. One also defines on $\mathcal{L}(\mu)$  an equivalence relation $\mathcal{R}$  by, $f\mathcal{R}g\Longleftrightarrow f = g\, \mu-a.e.$ The equivalence class of $f$ is denoted by $\tilde{f}.$ Thus, the quotient $\mathcal{L}(\Omega)/ \mathcal{R}$ defines the classical Banach space $L^{1}(\Omega,\mu)$ endowed with the norm $\varphi(\tilde{f})=\int_{\Omega}|f(x)|d\mu(x)$.\\

We also recall recall that the signature $\sign(x)$ of a real number $x$, is defined by  $\sign(x)=\frac{x}{|x|}$ if $x\neq 0$, and $\sign(x)=0$ if $x=0$. 

In order to prepare the proof of the main theorem we first establish a mild lemma which will help to obtain a pointwise formulae for the  G\^ateaux derivative. 

\begin{lem}\label{lmain1a} Let $f,\, h:\Omega\rightarrow \ar$  be functions and $x\in\Omega$ such that $h(x)\neq 0$. Then 
\beq\lim\limits_{t\rightarrow 0}\frac{|f(x)+th(x)|-|f(x)|}{t},\label{j6nr}\eq
exists if and only if $f(x)\neq 0$.\\
Moreover, if that is the case
\beq\lim\limits_{t\rightarrow 0}\frac{|f(x)+th(x)|-|f(x)|}{t}=\sign(f(x))h(x).\label{ftja46}\eq
\end{lem}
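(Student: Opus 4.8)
The plan is to reduce the statement to a computation in a single real variable, since the limit in \eqref{j6nr} depends only on the two real numbers $a := f(x)$ and $b := h(x)$, subject to the standing hypothesis $b \neq 0$. Writing $g(t) = \frac{|a + tb| - |a|}{t}$ for $t \neq 0$, I would establish both the equivalence and the formula by distinguishing the two cases $a = 0$ and $a \neq 0$.

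First I would handle the case $a \neq 0$, which yields the reverse implication together with the explicit formula \eqref{ftja46}. The key observation is that, since $a \neq 0$, for every $t$ with $|t| < |a|/|b|$ the quantity $a + tb$ has the same sign as $a$; hence $|a + tb| = \sign(a)(a + tb)$ and $|a| = \sign(a)\, a$ on this range. Substituting into $g(t)$ and cancelling, one finds $g(t) = \sign(a)\, b$ identically for all such $t$, so the limit exists and equals $\sign(f(x))\, h(x)$.

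To finish I would treat the case $a = 0$, which delivers the forward implication through its contrapositive. When $a = 0$ the expression collapses to $g(t) = \frac{|tb|}{t} = \sign(t)\,|b|$, whose right-hand limit as $t \to 0^{+}$ is $|b|$ and whose left-hand limit as $t \to 0^{-}$ is $-|b|$. Since $b = h(x) \neq 0$ these two one-sided limits are distinct, so the limit in \eqref{j6nr} fails to exist. Combining the two cases gives simultaneously the equivalence and the derivative formula.

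There is no genuine obstacle in this argument; the only points requiring a little care are the choice of threshold $|t| < |a|/|b|$ guaranteeing sign stability in the first case, and keeping track of the role of the hypothesis $h(x) \neq 0$, which is precisely what forces the one-sided limits to disagree in the degenerate case and thereby makes the nonvanishing of $f(x)$ necessary as well as sufficient.
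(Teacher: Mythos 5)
Your proposal is correct and follows essentially the same route as the paper: sign stability of $f(x)+th(x)$ for $|t|$ below a threshold proportional to $|f(x)|/|h(x)|$ in the case $f(x)\neq 0$, and the failure of the limit of $|th(x)|/t$ (equivalently, the disagreement of the two one-sided limits $\pm|h(x)|$) when $f(x)=0$. The only cosmetic difference is your threshold $|t|<|a|/|b|$ versus the paper's $|t|<|a|/(2|b|)$, which changes nothing.
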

\begin{proof} If the limit \eqref{j6nr} exists  and  $f(x)=0$, then the limit  
\[\lim\limits_{t\rightarrow 0}\frac{|th(x)|}{t}=\lim\limits_{t\rightarrow 0}\frac{|t||h(x)|}{t}\]
does not exists, which is a contradiction, hence $f(x)\neq 0$. \\

We now assume $f(x)\neq 0$. Since $h(x)\neq 0$, we can choose $\delta=\frac{|f(x)|}{2|h(x)|}$ and observe that for  $|t|<\delta$ we have 
 \beq \sign(f(x)+th(x))=\sign(f(x)).\label{sig8dd2}\eq
Hence
\begin{eqnarray*}
\frac{|f(x)+th(x)|-|f(x)|}{t}&=&\frac{\sign(f(x)+th(x))(f(x)+th(x))-\sign(f(x))f(x)}{t}\\
&=&\frac{\sign(f(x))t h(x)}{t}\\
&=&\sign(f(x)) h(x).
\end{eqnarray*}
Therefore
\[\lim\limits_{t\rightarrow 0}\frac{|f(x)+th(x)|-|f(x)|}{t}=\sign(f(x))h(x).\]
\end{proof}

We now consider a measure space $(\Omega, \cal{M},\mu)$. As a consequence of the  lemma above we obtain  the following corollary.
\begin{cor} \label{cordd2} Let $f, h:\Omega\rightarrow \ar$ be measurable functions such that $h(x)\neq 0$ $\mu$-a.e. Then 
\beq\lim\limits_{t\rightarrow 0}\frac{|f(x)+th(x)|-|f(x)|}{t} \mbox{ exists } \mu-a.e\label{limj57}\eq
if and only if $f(x)\neq 0$ $\mu$-a.e. Moreover, if that is the case the formula \eqref{ftja46} holds.
\end{cor}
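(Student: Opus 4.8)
The plan is to obtain the corollary as the almost-everywhere version of the pointwise Lemma \ref{lmain1a}, the only additional fact being that a finite union of $\mu$-null sets is again $\mu$-null. Throughout I would write $N_h=\{x\in\Omega:h(x)=0\}$, which by hypothesis satisfies $\mu(N_h)=0$.

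First I would handle the implication that $f\neq 0$ $\mu$-a.e. guarantees existence of the limit together with formula \eqref{ftja46}. Setting $N_f=\{x\in\Omega:f(x)=0\}$, so that $\mu(N_f)=0$, I note that every $x\notin N_h\cup N_f$ satisfies both $h(x)\neq 0$ and $f(x)\neq 0$; at each such $x$ the hypotheses of Lemma \ref{lmain1a} hold, so the limit in \eqref{limj57} exists and equals $\sign(f(x))h(x)$. Since $\mu(N_h\cup N_f)=0$, this proves existence $\mu$-a.e. and establishes \eqref{ftja46} simultaneously.

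For the converse I would argue off the null set on which the limit is assumed to exist: let $N_\ell$ be a $\mu$-null set such that the limit in \eqref{limj57} exists for every $x\notin N_\ell$. Then for $x\notin N_h\cup N_\ell$ we have $h(x)\neq 0$ and the limit exists, so the first assertion of Lemma \ref{lmain1a} forces $f(x)\neq 0$; as $N_h\cup N_\ell$ is null, $f(x)\neq 0$ $\mu$-a.e. I do not expect any genuine obstacle here, since all of the analytic content is already isolated in Lemma \ref{lmain1a} and what remains is purely set-theoretic bookkeeping of null sets. The mildest point of care is simply to read the phrase ``exists $\mu$-a.e.'' as the existence of an exceptional null set, and then to invoke the pointwise lemma only on the complement of the relevant finite union of null sets.
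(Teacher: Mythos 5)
Your proposal is correct and follows essentially the same route as the paper: both directions are reduced to the pointwise Lemma \ref{lmain1a} applied off a finite union of null sets, with the paper phrasing this via full-measure sets $A$, $D$, $\Gamma$ rather than your null sets $N_h$, $N_f$, $N_\ell$. If anything, your explicit introduction of the exceptional set $N_\ell$ in the converse direction is slightly more careful than the paper's wording, which asserts $f(x)\neq 0$ for every $x\in A$ rather than for every $x\in A$ outside the set where the limit fails to exist.
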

\begin{proof} We suppose that $h(x)\neq 0$ for all $x\in A$, with $\mu(A^c)=0$ and first  assume that  the limit \eqref{limj57} exists $\mu$-a.e. Hence, $f(x)\neq 0$ for every  $x\in A$ by Lemma \ref{lmain1a},  thus  $f(x)\neq 0$  $\mu$-a.e.  \\
On the other hand, if $f(x)\neq 0$  $\mu$-a.e. Then there exists a  set $D\in \cal{M}$ such that $\mu(D^c)=0$ and  $f(x)\neq 0$ for all $x\in D$. By taking $x\in A\cap D$ we have that $f(x), h(x)\neq 0$. Then by Lemma \ref{lmain1a} the limit \eqref{limj57} exists for every $x\in A\cap D$. Therefore the limit \eqref{limj57} 
 exists $\mu$-a.e.\\
 The last part of the statement clearly follows from the last part of Lemma \ref{lmain1a} by applying the formula \eqref{ftja46} in a set $\Gamma$ where we have $f(x), h(x)\neq 0$ and $\mu(\Gamma^c)=0.$
\end{proof}
We are now ready to present our main result. We consider a measure space $(\Omega, \cal{M},\mu)$ and the  following condition:\\

(A1)\,\,For every $ I\in\cal{M}$   with $ \mu(I)=\infty,$ there exists a set $F\subset I$  with $F\in\cal{M}$    such that 
\[ 0<\mu(F)<\infty.\]

Every  $\sigma$-finite measure that is not the trivial null measure satisfies the condition (A1). The counting measure on the power set of $\ar$ satisfies the condition (A1) and is not $\sigma$-finite.
 On the other hand, every non-atomic measure also satisfies the condition (A1).\\ 
 
The following theorem 
gives a precise description of the $G$-differentiability set for the usual  $L^{1}(\Omega,\mu)$ norm and the corresponding formulae for the  G\^ateaux derivative.\\

Let $G$ be the subset of $L^{1}(\Omega,\mu)$ consisting of equivalence classes $\tilde{f}$ such that $f\neq 0$ $\mu$-a.e., for some representative $f$ of the  class $\tilde{f}$. It is clear  that if $\tilde{f}\in G$, any representative of the class $\tilde{f}$ satisfies the condition defining $G$. 
\begin{thm}\label{main1a} Let $(\Omega, \cal{M},\mu)$ be a measure space satisfying the condition (A1). Let $B$ be the subset of $L^{1}(\Omega,\mu)$ above defined. Then the set of $G$-differentiability  of the norm $\varphi$ is $G$. Moreover, the G\^ateaux derivative of $\varphi$ for $\tilde{f}\in G$ and the direction $h\in L^{1}(\Omega,\mu)$ is given by 
	\beq\partial_{G}\varphi(\tilde{f})(h)=\int_{\Omega}\sign(f(x))h(x)d\mu(x),\label{Gder67}\eq
where $f$ is any representative of the class  $\tilde{f}$.	
	
\end{thm}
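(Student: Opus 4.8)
The plan is to establish the two inclusions separately: first that $\varphi$ is $G$-differentiable at every $\tilde{f}\in G$ with derivative given by~\eqref{Gder67}, and second that $\varphi$ fails to be $G$-differentiable at any $\tilde{f}\notin G$. The sufficiency direction will rest on a dominated convergence argument built on the pointwise Lemma~\ref{lmain1a}, while the necessity direction will exploit condition (A1) to construct a direction along which the one-sided difference quotients disagree.

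For the sufficiency, fix a representative $f$ with $f\neq 0$ $\mu$-a.e. and an arbitrary direction $h\in L^{1}(\Omega,\mu)$. First I would introduce the difference quotients
\[
g_{t}(x)=\frac{|f(x)+t h(x)|-|f(x)|}{t}
\]
and note that the reverse triangle inequality gives the uniform pointwise bound $|g_{t}(x)|\le |h(x)|$, so that $|h|\in L^{1}(\Omega,\mu)$ is an integrable majorant independent of $t$. By Lemma~\ref{lmain1a}, at each point where $f(x)\neq 0$ and $h(x)\neq 0$ one has $g_{t}(x)\to \sign(f(x))h(x)$ as $t\to 0$, while at points with $h(x)=0$ this limit is trivial since $g_{t}\equiv 0$ there; as $f\neq 0$ $\mu$-a.e., the convergence $g_{t}\to \sign(f)h$ thus holds $\mu$-a.e. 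The Dominated Convergence Theorem then permits interchanging the limit with the integral, yielding
\[
\lim_{t\to 0}\frac{\varphi(\tilde{f}+t\tilde{h})-\varphi(\tilde{f})}{t}=\int_{\Omega}\sign(f(x))h(x)\,d\mu(x).
\]
The right-hand side is manifestly linear in $h$, and the estimate $\big|\int_{\Omega}\sign(f)h\,d\mu\big|\le \|h\|_{L^{1}}$ shows it is a bounded linear functional; hence $\varphi$ is $G$-differentiable at $\tilde{f}$ and~\eqref{Gder67} holds.

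For the necessity, suppose $\tilde{f}\notin G$, so the measurable set $E=\{x:f(x)=0\}$ satisfies $\mu(E)>0$, a condition unaffected by the choice of representative up to a null set. The crucial step is to extract a subset $F\subseteq E$ with $0<\mu(F)<\infty$: if $\mu(E)<\infty$ I take $F=E$, and if $\mu(E)=\infty$ I invoke condition (A1) applied to $E$. Choosing $h=\chi_{F}$, which belongs to $L^{1}(\Omega,\mu)$ exactly because $\mu(F)<\infty$, I would then evaluate the difference quotient directly: the integrand vanishes outside $F$, whereas on $F$ one has $f=0$ and therefore $|f+th|-|f|=|t|$, so that
\[
\frac{\varphi(\tilde{f}+t\tilde{h})-\varphi(\tilde{f})}{t}=\frac{|t|}{t}\,\mu(F)=\sign(t)\,\mu(F).
\]
Since $\mu(F)>0$, the right and left limits as $t\to 0$ equal $+\mu(F)$ and $-\mu(F)$ respectively; these differ, so the G\^ateaux limit does not exist and $\varphi$ is not $G$-differentiable at $\tilde{f}$.

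I expect the main obstacle to be the necessity direction, and more precisely the possibility that the zero set $E$ has infinite measure. A positive-measure zero set by itself does not furnish an admissible test direction, since the natural candidate $\chi_{E}$ need not lie in $L^{1}(\Omega,\mu)$; it is exactly at this point that condition (A1) becomes indispensable, guaranteeing a finite-measure witness $F$ on which to concentrate the perturbation. By contrast the sufficiency direction is comparatively routine once the uniform domination $|g_{t}|\le |h|$ has been observed.
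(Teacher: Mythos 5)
Your proof is correct and follows essentially the same route as the paper: dominated convergence combined with the pointwise Lemma~\ref{lmain1a} for sufficiency, and, for necessity, a perturbation by the characteristic function of a finite-measure subset of the zero set extracted via condition (A1), along which the one-sided difference quotients equal $\pm\mu(F)$. In fact your sufficiency step is slightly more careful than the paper's, which invokes Corollary~\ref{cordd2} under the hypothesis $h\neq 0$ $\mu$-a.e.\ (not satisfied by an arbitrary $h\in L^{1}(\Omega,\mu)$), whereas you correctly observe that the difference quotient vanishes identically wherever $h(x)=0$ and you make the dominating function $|h|$ explicit.
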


\begin{proof} %We will present the full proof in the case $n=1$ for simplicity. A careful look at its details will convince the reader ****\\
We assume $\tilde{f}\in G$ and that $f$ is a representative of the class $\tilde{f}$. We start by observing that for any $h\in  L^{1}(\Omega,\mu)$ the right hand side of the  formula \eqref{Gder67} is independent of the representative in the class $\tilde{f}$. \\
	
We are now going to prove the $G$-differentiability of $\varphi$ at $\tilde{f}$. Let $h\in  L^{1}(\Omega,\mu)\setminus\{0\}$, an application of the  Lebesgue dominated convergence Theorem gives us
\begin{eqnarray*}
	\lim\limits_{t\rightarrow 0}\frac{\varphi(f+th)-\varphi(f)}{t}&=&\lim\limits_{t\rightarrow 0} \frac{\int\limits_{\Omega}|f(x)+th(x)|d\mu(x)-\int\limits_{\Omega}|f(x)|d\mu(x)}{t}\\
&=&\lim\limits_{t\rightarrow 0}\, \frac{\int\limits_{\Omega}\left(|f(x)+th(x)|-|f(x)|\right)d\mu(x)}{t}\\
&=&\int\limits_{\Omega}\lim\limits_{t\rightarrow 0} \frac{\,\left(|f(x)+th(x)|-|f(x)|\right)}{t}d\mu(x).\\
	 	\end{eqnarray*} 
Since $h(x)\neq 0$ $\mu$-a.e. and  $f(x)\neq 0$ $\mu$-a.e., by applying Corollary \ref{cordd2} we have

\[\lim\limits_{t\rightarrow 0}\frac{|f(x)+th(x)|-|f(x)|}{t}=\sign(f(x))h(x)\]
$\mu$-a.e., and

 \[\lim\limits_{t\rightarrow 0}\frac{\varphi(f+th)-\varphi(f)}{t}=\int_{\Omega}\sign(f(x))h(x)d\mu(x).\]
	
We now assume that $\tilde{f}\in G^c$ and we will show that $\varphi$ does not have $G$-derivative in $\tilde{f}$. For the set $Z_1=\{x\in\Omega:f(x)=0\}$ we have
$\mu(Z_1)>0.$ If $\mu(Z_1)=\infty$ by condition (A1) we can choose a subset $Z$ of $Z_1$ such that $0<\mu(Z)<\infty.$ We consider $h=1_{Z}$, thus $h\in L^{1}(\Omega, \mu)\setminus 0$. Since $h=0$ on $Z^c$, for $t\in\ar$ we have  $|f+th|-|f|=|f|-|f|=0$ on  $Z^c$. Hence, for the calculation of the $G$-derivative of $\varphi$ at $f$ in the direction $h$, is enough to consider the integration on $Z$.
 For $t\neq 0$ we have 
		\begin{eqnarray*}
		\frac{\varphi(f+th)-\varphi(f)}{t}&=& \frac{\left(\int_{Z}|f(x)+th(x)|d\mu(x)-\int_{Z}|f(x)|d\mu(x)\right)}{t} \nonumber\\ 
		 &=&\frac{ \int_{Z}|t|d\mu(x)}{t}\nonumber\\
		 &=&\frac{|t|}{t}\mu(Z)\nonumber\\
		 &=&\sign (t)\mu(Z).\nonumber\\
		\end{eqnarray*}
Hence,  we have $(\partial_G^+\varphi)f(h)=\mu(Z)$ and $(\partial_G^+\varphi)f(h)=-\mu(Z)$. Therefore  the $G$-derivative of $\varphi$ at $f$ does not exists.\\
 
It is clear that the formula \eqref{Gder67} defines a bounded linear operator from  $ L^{1}(\Omega, \mu)$ into $\ar$ for every $\tilde{f}$ in $G$, and this concludes the proof of the theorem. 
\end{proof}

\begin{rem} (1) In the case $\Omega=\mathbb{N}$ and the counting measure, Theorem \ref {main1a} absorbs a more classical result on the G\^ateaux differentiability of the  $\ell^1(\ene)$-norm, namely that the set of  G\^ateaux differentiability of the $ \ell^1(\ene)$-norm is the  set $G\subset \ell^1(\ene)$ of sequences $(x_n)_n$ such that $x_n\neq 0$ for all $n$, and the G\^ateaux derivative of $\varphi$ at $x$ and in the direction $h$ is given by
\[\partial_{G}\varphi(x)(h)=\sum\limits_n\sign(x_n)h_n.\]
  
(2) The condition (A1) allows $L^1$ to be be non-separable. 
Therefore, Theorem \ref {main1a} gives a description of the set of differentiability  beyond the scope of Phelps's  and Mazur's theorem. \\
\end{rem}	
In the special separable case, as a consequence of Theorem \ref{main1a} and Phelps Theorem \ref{phelps} we have:
\begin{cor} Let $(\Omega,\cal{M}, \mu)$ be a measure space satisfying the  condition (A1) and such that $L^1(\Omega,\cal{M}, \mu)$ is separable. Then, the set of  G\^ateaux differentiability of the $L^1$-norm is $G$, $G$ is a  $G_{\delta}$ set and $G^c$ is a Gaussian null set.  
\end{cor}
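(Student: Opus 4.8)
The plan is to combine the explicit description of the differentiability set supplied by Theorem \ref{main1a} with the two abstract results quoted earlier. Write $D\subseteq L^{1}(\Omega,\mu)$ for the set of points at which the norm $\varphi$ is G\^ateaux differentiable. The first assertion of the corollary is immediate: Theorem \ref{main1a} identifies $D$ precisely as $D=G$. The remaining two assertions then follow by establishing that $D$ is a dense $G_{\delta}$ set and that $D^{c}$ is contained in a Gaussian null set, and transferring these properties back to $G$ and $G^{c}$ through the identity $D=G$.

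First I would verify the hypotheses needed to invoke Mazur's Theorem (Theorem \ref{a4}). The space $L^{1}(\Omega,\mu)$ is separable by assumption, hence a $G$-Asplund space. The norm $\varphi$ is convex and continuous on all of $L^{1}(\Omega,\mu)$, which is itself an open convex set. By the defining property of a $G$-Asplund space, $\varphi$ is therefore G\^ateaux differentiable on a dense $G_{\delta}$ subset; but that subset is exactly $D$, and $D=G$ by Theorem \ref{main1a}. Hence $G$ is a dense $G_{\delta}$ set, which is the second assertion.

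Next I would check the hypotheses of Phelps's Theorem (Theorem \ref{phelps}). Here $E=L^{1}(\Omega,\mu)$ is a separable real Banach space and the target $F=\ar$ has the Radon-Nikod\'ym property. The norm $\varphi:L^{1}(\Omega,\mu)\rightarrow\ar$ is globally Lipschitz, since $|\varphi(f)-\varphi(g)|\le\|f-g\|_{L^{1}}$, and in particular locally Lipschitz on the open set $L^{1}(\Omega,\mu)$. Phelps's Theorem then yields a Gaussian null set $N$ such that $\varphi$ is G\^ateaux differentiable at every point of $N^{c}$; equivalently $D^{c}\subseteq N$. Using $D=G$ once more gives $G^{c}\subseteq N$, and since Gaussian null sets form a $\sigma$-ideal (in particular they are stable under passing to subsets), $G^{c}$ is itself a Gaussian null set. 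This is the third assertion.

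The argument is thus a matching-up of the concrete set $G$ furnished by Theorem \ref{main1a} with the abstract differentiability sets guaranteed by Mazur and Phelps. The only point requiring care is the logical direction: Mazur and Phelps assert the \emph{existence} of a dense $G_{\delta}$ differentiability set and of a co-Gaussian-null differentiability set, respectively, but because Theorem \ref{main1a} pins the differentiability set down \emph{uniquely} as $G$, these existence statements immediately become structural statements about $G$ and $G^{c}$. No separate computation is needed beyond confirming that $\varphi$ is convex, continuous, and $1$-Lipschitz, and recalling that $\ar$ has the RNP.
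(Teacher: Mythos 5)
Your proposal is correct and follows essentially the same route as the paper, which states the corollary as an immediate combination of Theorem \ref{main1a} with Mazur's Theorem (for the dense $G_{\delta}$ property) and Phelps's Theorem (for the Gaussian null complement), exactly as in its Remark 2.3. You in fact supply slightly more detail than the paper does --- verifying the $1$-Lipschitz and convexity hypotheses and noting that Gaussian null sets are closed under subsets --- but the argument is the same.
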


As quoted before $L^1(\Omega,\cal{M}, \mu)$ is separable provided $\mu$ is $\sigma$-finite and the  $\sigma$-algebra $\cal{M}$ is countably generated. If the measure $\mu$ is not the trivial null measure then $\mu$ satisfies the condition (A1). 

%\noindent{\bf{Acknowledgements}}\\
%\noindent The first author was supported by Grant CI71234 Vic. Inv. Universidad del Valle . 

%The second  author would like to thank Universidad Santiago de Cali for ....

%pointed out a number of corrections and valuable suggestions helping to improve the %main results and the presentation of the manuscript.

%\section*{References}

%\bibliography{mybibfile}

\end{document}